\documentclass[11pt]{amsart}

\RequirePackage[l2tabu,orthodox]{nag}
\usepackage[T1]{fontenc}
\usepackage[utf8]{inputenc}
\usepackage{lmodern}
\usepackage[english]{babel}
\usepackage{color} 
\usepackage[usenames,dvipsnames]{xcolor}
\usepackage{amsrefs}
\usepackage{amsthm,amssymb,amsmath}
\usepackage{enumerate}
\usepackage[inline]{enumitem}
\usepackage{url}
\usepackage{multicol}
\usepackage{mathrsfs}  

\newtheorem{thm}{Theorem}[section]
\newtheorem*{introthm}{Main Theorem}

\newtheorem{prop}[thm]{Proposition}
\newtheorem{cor}[thm]{Corollary}

\theoremstyle{definition}

\theoremstyle{remark}
\newtheorem{rem}[thm]{Remark}

\usepackage[margin=3cm]{geometry}

\usepackage{indentfirst}
\usepackage{microtype}

\usepackage[colorlinks=true,linkcolor=blue,citecolor=magenta]{hyperref}

\usepackage{blindtext}

\usepackage{nameref}

\DeclareMathOperator{\homeo}{\mathsf{Homeo}}

\DeclareMathOperator{\PL}{\mathsf{PL}}

\newcommand{\fix}{\operatorname{\mathsf{Fix}}}

\newcommand{\R}{\mathbb{R}}
\newcommand{\Q}{\mathbb Q}
\newcommand{\N}{\mathbb N}
\newcommand{\Z}{\mathbb Z}
\newcommand{\T}{\mathbb S^1}

\renewcommand{\setminus}{\smallsetminus}

\renewcommand{\ker}{\operatorname{\mathsf{ker}}}

\newcommand{\rot}{\mathsf{rot}}
\DeclareMathOperator{\supp}{\mathsf{Supp}}

\title{$\Z^2$-free subgroups of Thompson's $T$ are virtually free}

\author{Nicol\'as Matte Bon \and Michele Triestino}
\date{\today}
\keywords{Thompson's groups, groups acting on the circle, groups of piecewise linear homeomorphisms}

\begin{document}
\begin{abstract}
It is an open problem to determine whether surface groups can embed in Thompson's group $V$.
We prove that any finitely generated subgroup of Thompson's group $T$ without abelian groups of arbitrary high rank is either virtually abelian or virtually free. In particular, surface groups don't embed in $T$, answering a question of Belk and Moore.

	\smallskip
	
	{\noindent\footnotesize \textbf{MSC\textup{2020}:} Primary 37C85, 57M60. Secondary 37E05, 37E10.}

\end{abstract}

\maketitle

\section{Introduction}

The celebrated \emph{Thompson's group $V$} is the group of all (orientation-preserving, right-continuous) piecewise-linear (PL) bijections of the circle $\T=\R/\Z$, which are locally of the form $x\mapsto 2^kx+p/2^q$ and with breaks in $\Z[\tfrac12]/\Z$. This group has been introduced by R.~J.~Thompson in 1965, and it provided, together with the subgroup $T\le V$ of all elements defining circle homeomorphisms, the first examples of infinite simple groups which are finitely presented.
The survey by Cannon, Floyd, and Parry \cite{CFP} is a standard introduction to these and related groups. The subgroup structure of Thompson's groups $T$ and $V$ is still under investigation, and only sparse results are available; see for instance the survey by Burillo, Cleary, and R\"over \cite{MR3822284}. As stated by Bleak, Matucci, and Neunh\"{o}ffer \cite[Question 7]{BMN}, it is an open problem to determine whether a (higher genus, closed) surface group embeds in Thompson's group $V$. As Thompson's group $T$ is naturally a subgroup of $V$, Jim Belk and Justin Tatch Moore asked as an intermediary question whether surface groups embed in Thompson's group $T$. We give a negative answer to this question, by characterizing subgroups of $T$ which are \emph{$\Z\wr\Z$-free}, namely without subgroups isomorphic to the wreath product $\Z\wr\Z$.

\begin{introthm}
	Let $G$ be a finitely generated subgroup of Thompson's group $T$. Then we have the following trichotomy:
	\begin{enumerate}
	\item either $G$ is virtually abelian, or
	\item $G$ is virtually free, or
	\item $G$ contains a subgroup isomorphic to $\Z\wr\Z$.
	\end{enumerate}
\end{introthm}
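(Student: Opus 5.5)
We will analyse the action of $G$ on $\T$ through its dynamics, using the rotation number together with the piecewise-linear structure.

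\emph{Case 1: $G$ has a finite orbit.} Passing to the finite-index stabilizer of a point of that orbit, we may assume $G$ fixes a point. Cutting the circle there, $G$ becomes a finitely generated subgroup of Thompson's group $F$, acting on an interval. For subgroups of $F$ we use the standard dichotomy, due to Brin and Bleak: a subgroup of $F$ is either abelian, or it contains $\Z\wr\Z$. The mechanism is as follows. If $G$ is non-abelian, there is a component $I$ of the support of some element $g$ and an element $h$ that moves an endpoint of $I$ into the interior of $I$. By the PL structure, $g$ is linear near that endpoint, and one can then produce an element $f$ with $\supp(f)\Subset I$. Its conjugates by powers of $g$ have pairwise disjoint supports, so $\langle f, g\rangle \cong \Z\wr\Z$. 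In this case we therefore get alternative (1) or (3).

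\emph{Case 2: no finite orbit.} Here $G$ either has a dense orbit (it is minimal) or it has an exceptional minimal Cantor set. We first exclude the Cantor set: since all breakpoints are dyadic and the slopes are powers of $2$, a finitely generated PL group with a Cantor minimal set should yield a local PL element with support accumulating inside a gap. This in turn should produce $\Z\wr\Z$, or else a semiconjugacy to a minimal action. In any event we reduce to the case where $G$ acts minimally and is not conjugate to a group of rotations; a group of rotations would put us in alternative (1).

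Now assume that $\Z\wr\Z$ does not embed. The key claim is that every element $g\in G$ with a fixed point has only finitely many fixed points, and its support components are well controlled. We argue by contradiction. If some $g$ fixes an interval $J$ pointwise, minimality gives $h$ with $h(J)$ intersecting $\partial J$ in a suitable way. Then the commutator trick of Case 1, applied to $g$ and $h$ on a support component, produces $\Z\wr\Z$. So every element is either of finite order, or has finitely many fixed points, or has irrational rotation number; but PL elements of $T$ have rational rotation number, by Ghys--Sergiescu. This puts $G$ in the setting of groups acting on the circle in which every non-torsion element has a power with finitely many fixed points, all of them hyperbolic: at each fixed point the derivative is a nontrivial power of $2$. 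The absence of parabolic fixed points is forced, since otherwise the crossing argument again gives $\Z\wr\Z$. We then aim to show that the action is, up to finite index, a Schottky-like ping-pong action. Take the collection of attracting and repelling fixed points, and use compactness together with finite generation to build a finite family of disjoint arcs on which the generators play ping-pong. Alternatively, we could show that $G$ acts properly on the tree dual to the dyadic pieces, or on a quasi-tree. Either way we would obtain that $G$ is virtually free.

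The main obstacle is this last step. Local ``convergence-group'' dynamics (each element being hyperbolic with finitely many fixed points) must be upgraded to a global virtual freeness statement. I would first try to prove that $G$ acts on $\T$ as a convergence group. This uses the PL rigidity: uniformly bounded numbers of breakpoints are not available, but hyperbolicity at fixed points combined with the exclusion of $\Z\wr\Z$ should prevent non-discrete accumulation. Then Tukia, Gabai and Casson--Jungreis would make $G$ virtually Fuchsian. Among Fuchsian groups, surface groups must be excluded, and this would be done by showing that the limit set is not the whole circle, because dyadic rationals give parabolic-like behaviour. That leaves the virtually free groups, and alternative (2).
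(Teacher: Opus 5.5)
\textbf{Verdict: genuine gap.} Case~2 contains a false reduction, and the decisive last step is missing.

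\textbf{Case~1 and the first part of Case~2.} Your Case~1 is sound. Elements of $T$ have rational rotation number, so for finitely generated subgroups of $T$, ``elementary'' and ``has a finite orbit'' coincide. A finite-index point stabilizer then embeds in $\PL([0,1])$; note it need not embed in $F$, since the fixed point need not be dyadic. There Guba--Sapir gives ``abelian or $\Z\wr\Z$''.

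In Case~2 the Cantor minimal set cannot be excluded. Consider the Schottky group generated by $z\mapsto\frac{5z+12}{2z+5}$ and $z\mapsto\frac{5z+2}{12z+5}$. Its isometric circles are pairwise disjoint with free sides, so its limit set is a Cantor set. Transported into $T$ through Thurston's piecewise-$\mathsf{PSL}(2,\Z)$ model, it is a free, $\Z\wr\Z$-free subgroup of $T$ with Cantor minimal set. Your fallback of semiconjugating to a minimal action discards the PL structure you use afterwards. The reduction is also unnecessary: one can work directly on the minimal set $\Lambda$.

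Your finiteness-of-fixed-points step is the right idea; it is Proposition~\ref{l.non-elementary}. However, ``minimality gives $h$ \dots\ in a suitable way'' does not by itself place $g$ and $h$ in a common point stabilizer. The paper gets such an $h$ from the Antonov--Margulis theorem: the action becomes proximal after quotienting by a finite-order homeomorphism commuting with $G$. This yields an $h$ with fixed points, all fixed by $g$, and with $h(\supp g)\subset\fix(g)$. Guba--Sapir then applies.

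\textbf{The genuine gap: the last step.} You need to pass from ``every nontrivial element has finitely many fixed points'' to ``virtually free''. Everything you have established before that point also holds for Ghys's PL actions of surface groups in $\PL(\T)$, which are conjugate to Fuchsian actions. That list is: minimal action, no $\Z\wr\Z$, rational rotation numbers, finitely many fixed points with nontrivial one-sided derivatives. So no argument using only these properties can conclude. The specific arithmetic of $T$ must enter essentially, and your proposal never makes that precise.

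Both concrete routes you suggest fail.
\begin{itemize}
\item A minimal, $\Z\wr\Z$-free subgroup of $T$ need not be a convergence group. Lifts of elements of $T$ through the double cover $x\mapsto 2x$ stay in $T$. The group of all lifts of $\mathsf{PSL}(2,\Z)\le T$ acts minimally and is virtually free. It contains infinite-order elements with four fixed points (lifts of hyperbolic elements), which a convergence group cannot have.
\item ``The limit set is not the whole circle'' contradicts the minimality you have just reduced to. Finite-index free subgroups of $\mathsf{PSL}(2,\Z)$ lie in $T$ with full limit set. What separates them from surface groups is non-cocompactness, not the limit set.
\end{itemize}

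\textbf{The missing idea.} Since $G$ is countable and each nontrivial element fixes only finitely many points of $\Lambda$, all but countably many points of $\Lambda$ have a free $G$-orbit. In Thurston's model, the Schreier graph of any $G$-orbit admits an injective $1$-Lipschitz map into the Schreier graph of a $\mathsf{PSL}(2,\Z)$-orbit, which is a quasi-tree. By the separation-profile argument (Benjamini--Schramm--Tim\'ar, Hume--Mackay), the $G$-orbit's Schreier graph is itself a quasi-tree. For a free orbit this Schreier graph is a Cayley graph of $G$, so $G$ is virtually free (Proposition~\ref{p.free_orbit}). Your ``act on a quasi-tree'' alternative points in this direction, but without the free orbit and the comparison with $\mathsf{PSL}(2,\Z)$ it has no content.
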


\begin{rem}
	It has been proved by Alvarez \textit{et al.} in \cite{PingPong1} that any finitely generated virtually free group $G$ of orientation-preserving circle homeomorphisms (abstractly) embeds  into Thompson's group $T$, and these are exactly the finitely generated \emph{free-by-finite-cyclic groups}: there exists a free normal subgroup $H$ and a finite cyclic group $\Z_m$ such that $G/H\cong \Z_m$. On the other hand, any finitely generated virtually free group embeds in $V$ (see for instance the work of Bennett and Bleak \cite{BennettBleak}*{Theorem 1.1}).
\end{rem}

\begin{rem}
	Trying to guess a similar statement for Thompson's group $V$, we remark that one can embed in $V$ any wreath product $H\wr \Z$, with $H$ finite. There are interesting examples of finitely generated, $\Z^2$-free subgroups of $V$ which are not virtually free, such as the Houghton's groups $H_2$ and $H_3$ (the latter one is even finitely presented), but they indeed contain wreath products $H\wr\Z$ as above. See for instance the work of Antol\'in, Burillo, and Martino \cite{MR3302573} for an introduction to these groups.
\end{rem}

\begin{rem}
Note that the group $\PL(\T)$ of piecewise-linear circle homeomorphisms contains surface groups: examples of actions of surface groups with maximal Euler class (conjugate to Fuchsian groups) have been constructed by Ghys \cite{GhysAIF}; see also the related work of Minakawa \cite{Minakawa}. In this direction, the recent work of Dinamarca, Escayola, Kim, and Koberda \cite{DEKK} investigates the structure of acylindrically hyperbolic subgroups of $\PL(\T)$. 
\end{rem}

\section{Subgroups with free orbits}

Before discussing the proof of the main theorem, we start with a partial result, which holds also for particular subgroups of Thompson's $V$. For this, it will be more convenient to consider Thurston's interpretation of $V$ as the group of piecewise-$\mathsf{PSL}(2,\Z)$ (orientation-preserving, right-continuous) bijections of the projective line $\R P^1=\R\cup \{\infty\}$ with breaks in $\Q\cup \{\infty\}$ (see \cite{CFP} for more details). This piecewise-$\mathsf{PSL}(2,\Z)$ action of $V$ is topologically conjugate to the standard PL action on $\T$. Recall also that the group  $\mathsf{PSL}(2,\Z)$ of integral M\"obius transformations is isomorphic to $\Z_2*\Z_3$, and thus virtually free.

\begin{prop}\label{p.free_orbit}
	Let $G\le V$ be a finitely generated subgroup whose action on $\R P^1\cong \T$ admits free orbits. Then $G$ is virtually free.
\end{prop}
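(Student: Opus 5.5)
The plan is to realise the Cayley graph of $G$ inside a tree-like graph built from the $\mathsf{PSL}(2,\Z)$-orbit of a point with free $G$-orbit, and then to conclude that $G$ is quasi-isometric to a tree. A finitely generated group quasi-isometric to a tree is virtually free (Stallings--Dunwoody, or Gromov's characterisation of virtually free groups), which would give the statement.

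\emph{Setup.} Fix a finite symmetric generating set $S$ of $G$ and a point $x\in\R P^1$ whose $G$-orbit is free. Each $s\in S$ is piecewise $\mathsf{PSL}(2,\Z)$ with finitely many pieces. Hence the orbit $G\cdot x$ lies in $\mathsf{PSL}(2,\Z)\cdot x$, and $s(y)=\gamma_{s,y}(y)$ with $\gamma_{s,y}$ ranging over a finite set $F\subset \mathsf{PSL}(2,\Z)$. If $x$ is rational, the orbit $G\cdot x$ lies in $\Q\cup\{\infty\}$, and I would handle this case separately. Otherwise I would use Serret's theorem: two irrationals lie in the same $\mathsf{PSL}(2,\Z)$-orbit if and only if their continued fraction expansions eventually coincide. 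So every point $y\in\mathsf{PSL}(2,\Z)\cdot x$ can be written as (finite prefix)$\cdot$(tail of $x$). Applying an element of $F$ changes only a prefix of bounded size, up to a bounded shift of where the tail begins. This is the analogue of the prefix-replacement model of $V$ acting on infinite binary sequences, and the combinatorial argument is most transparent in that model.

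\emph{Key steps.} First, define a graph $X$ on the orbit $\mathsf{PSL}(2,\Z)\cdot x$ by joining $y$ and $\gamma(y)$ for $\gamma$ in a fixed finite generating set containing $F$. Then show that $X$ is a quasi-tree. The stabiliser of $x$ in $\mathsf{PSL}(2,\Z)$ is finite or cyclic, so $X$ is coarsely the quotient of the Bass--Serre tree of $\Z_2*\Z_3$ by a subgroup that is trivial, finite, or infinite cyclic. Such a quotient is either a tree or a tree with one cycle (a quasi-convex line collapsed to a loop), hence a quasi-tree. Second, the orbit map $g\mapsto g(x)$ is injective by freeness of the orbit and $1$-Lipschitz from $\mathrm{Cay}(G,S)$ to $X$. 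Third, upgrade this to a quasi-isometric embedding onto its image. Then $\mathrm{Cay}(G,S)$ is quasi-isometric to a connected subset of a quasi-tree, so $G$ is hyperbolic with totally disconnected boundary, hence virtually free.

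\emph{Main obstacle.} The difficulty is the third step, the lower bound $d_X(g(x),h(x))\ge c\,|g^{-1}h|_S-C$. Injectivity and Lipschitz bounds alone do not prevent the orbit from folding back on itself in $X$. An element of $V$ is piecewise $\mathsf{PSL}(2,\Z)$, so it does not act on $X$ by a coarse isometry: conjugating by $\gamma_{s,y}$ can distort distances. I would first attack this through the continued-fraction prefix description. Points of $G\cdot x$ that are close in $X$ share a long common tail and differ only in bounded prefixes. Since the pieces of the finitely many generators have rational breakpoints, which are determined by bounded prefixes, the element $g^{-1}h$ should act near $h(x)$ by a bounded prefix replacement. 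Freeness of the orbit should then force $g^{-1}h$ to lie in a finite set. If this direct estimate fails, my fallback is to use $X$ only to produce ends: show that $G$ has more than one end unless it is virtually cyclic, cutting $X$ at an edge crossed by the orbit, and then conclude by an accessibility argument.
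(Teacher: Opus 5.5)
Your Steps 1 and 2 are sound and match the paper's setup. The argument breaks at Step 3, and not only because it is unproven: the orbit map is in general not a quasi-isometric embedding, not even a coarse embedding. So no refinement of the continued-fraction heuristic can give the lower bound.

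First, a convention point. Step 2 holds for the metric $d(g,h)=|hg^{-1}|_S$, which comes from the Schreier edges $y\sim s(y)$. With $|g^{-1}h|_S$, as you wrote, the orbit map is not even Lipschitz.

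Here is a counterexample in $T$, in Thurston's model.
\begin{enumerate}
\item Take $\gamma_0\in\mathsf{PSL}(2,\Z)$ hyperbolic, and a small Farey interval $I$ avoiding the fixed points of $\gamma_0$ with $\gamma_0\bar I\cap\bar I=\emptyset$.
\item Using two Farey partitions, build $s\in T$ with $s=\gamma_0$ on $I$ and $s(\T\setminus B^-)\subset B^+$. Here $B^+\supset\gamma_0\bar I$ and $B^-$ are small intervals.
\item For $N$ large, $f=\gamma_0^N$ and $s$ play ping-pong on pairwise disjoint sets $A^\pm$ (neighbourhoods of the fixed points of $\gamma_0$) and $B^\pm$, all disjoint from $\bar I$. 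Hence $G=\langle f,s\rangle$ is free, and every $z\in I$ has a free orbit.
\item Put $y_n=f^n(z)$ and $k_n=f^nsf^{-n}$. Then $k_n(y_n)=\gamma_0^{Nn}\gamma_0(z)=\gamma_0(y_n)$, so $y_n$ and $k_n(y_n)$ are adjacent in $X$.
\item Their distance in the Schreier graph of $G\cdot z$, i.e.\ in the Cayley graph of $G$, is $|k_n|_S=2n+1$.
\end{enumerate}

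The underlying reason is that freeness only makes the germ map $g\mapsto\gamma_{g,y}$ (the $\mathsf{PSL}(2,\Z)$-germ of $g$ at $y$) injective at each \emph{fixed} orbit point $y$. There is no uniformity in $y$: a long element can act near a distant orbit point by a short germ. So your claim that the relevant group element lies in a finite set once $g(x),h(x)$ are close in $X$ is false.

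The missing idea is that no lower bound is needed. An injective $1$-Lipschitz map between bounded-degree graphs is a regular map in the sense of Benjamini--Schramm--Tim\'ar, and the separation profile is monotone under regular maps. Since $X$ is a quasi-tree, its separation profile is bounded, hence so is that of the Cayley graph of $G$. By Hume--Mackay, a connected, vertex-transitive, bounded-degree graph with bounded separation profile is a quasi-tree, so $G$ is virtually free. This is exactly how the paper finishes, right after the $1$-Lipschitz injection you construct. Serret's theorem and the separate treatment of rational $x$ are then unnecessary.

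Your fallback through ends does not avoid the difficulty. To cut $X$ so that both sides contain infinitely many orbit points, you must first exclude a one-ended group admitting an injective $1$-Lipschitz map into a quasi-tree. That is precisely what the separation-profile argument supplies, and you would still need an accessibility result to iterate the splittings.
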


\begin{proof}
	  Fix a finite generating set $S$ for $G$, and let $S'\subset \mathsf{PSL}(2,\Z)$ be the collection of elements which locally coincide with some element from $S$ (considering Thurston's interpretation of $V$). Up to adding finitely many elements to $S'$, we can assume that the subgroup $\Gamma:=\langle S'\rangle$ is the whole group $\mathsf{PSL}(2,\Z)$.
	   With this choice, we have that for any $x\in \R P^1$, the orbit $G\cdot x$ is contained in the orbit $\Gamma\cdot x$. Moreover, we can define a map from the Schreier graph of the orbit $G\cdot x$, with respect to the generating system $S$, to the Schreier graph of the orbit $\Gamma\cdot x$, with respect to the generating set $S'$. Indeed, if $y\in G\cdot x$ and $s\in S$, there exists a unique $s'\in S'$ such that $s$ coincides with $s'$ on a right neighborhood of $y$. In particular, $s(y)=s'(y)$, so that the edge $(y,s(y))$ in the Schreier graph of $G\cdot x$ is sent to the edge $(y,s'(y))$ in the Schreier graph of $\Gamma\cdot x$. The map defined in this way is a 1-Lipschitz embedding of the Schreier graph of $G\cdot x$ into $\Gamma\cdot x$.
	  Now, it is well-known that for the action of $\Gamma=\mathsf{PSL}(2,\Z)$ on $\R P^1$, the Schreier graph of the orbit of any point is a \emph{quasi-tree} (that is, quasi-isometric to a tree). Therefore, the map defined above gives a 1-Lipschitz embedding of the Schreier graph of $G\cdot x$ into a quasi-tree. As we have learnt from Romain Tessera, this implies that the Schreier graph of $G\cdot x$ itself is a quasi-tree. One way to see this, is by using the notion of \emph{separation profile} introduced by Benjamini, Schramm, and Tim\'ar in \cite{BSTseparation}: for a graph $X$, this is a function $\mathsf{sep}_X\colon \N\to \N$ that for any $n\in \N$ gives the supremum over
	all subgraphs of size $n$, of the number of vertices needed to be removed from the
	subgraph, in order to cut it to connected subgraphs of size at most $n/2$. If a graph $Y$ coarsely embeds in a graph $X$ (in particular, if there exists a $1$-Lipschitz embedding of $Y$ in $X$), then $\mathsf{sep}_Y(n)=O(\mathsf{sep}_X(n))$ as $n\to \infty$, see \cite[Lemma 1.3]{BSTseparation}. When $X$ is a vertex-transitive, bounded-degree, connected graph (conditions all satisfied by Schreier graphs of actions of finitely generated groups), then the separation profile of $X$ is bounded if and only if $X$ is a quasi-tree, as observed by Hume and Mackay \cite[Theorem 1.2]{HMseparation}. This yields the desired conclusion.
	  
	  Now, if $x\in \R P^1$ has trivial stabilizer for the action of $G$, the Schreier graph of $G\cdot x$ can be identified with the Cayley graph of the group $G$. Since a finitely generated group is virtually free if and only if its Cayley graph is a quasi-tree, we deduce that $G$ is virtually free.\qedhere

\end{proof}

\begin{rem}\label{rem:HSZ}
One can extract from the proof above the fact that Schreier graphs defined by the action on $\T$ by a finitely generated subgroup of $V$ are quasi-trees. This result is Theorem~A in the recent work of Hyde, Skipper, and Zaremsky \cite{HSZ}; the proof is essentially the same, with the main difference being that the remark of Tessera provides a big shortcut.
\end{rem}

\begin{rem}
The reader may compare the statement of Proposition \ref{p.free_orbit} with the work of Bennett and Bleak \cite{BennettBleak}: Lemma 2.11 there gives the same conclusion assuming instead the existence of a non-empty open interval in $\T$ whose $G$-orbit is free. Their proof relies on the characterization by Muller and Schupp of finitely generated virtually free groups as those with context-free word problem.
\end{rem}

\section{$\Z\wr\Z$-free groups of PL circle homeomorphisms}

The next partial results are about subgroups of $\PL(\T)$ which do not contain copies of $\Z\wr\Z$. Here we will highly rely on structural properties of groups acting on the circle, and specific dynamical properties of groups of PL circle homeomorphisms. The analogue approach for studying subgroups of $V$ seems sensibly more challenging.

The first fundamental ingredient is a classical result by Guba and Sapir \cite{GubaSapir} on groups of piecewise linear homeomorphisms of the interval.

\begin{thm}[Guba--Sapir]
	Let $G\le \PL([0,1])$ be a non-abelian subgroup of the group of orientation-preserving PL homeomorphisms of the interval $[0,1]$. Then $G$ contains a subgroup isomorphic to $\Z\wr \Z$.
\end{thm}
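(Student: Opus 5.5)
The plan is to follow the classical argument of Guba and Sapir, which rests on two dynamical features of $\PL([0,1])$: the support of any element is a finite union of open intervals, and near each endpoint of a component of its support an element acts as a germ of a linear map with slope $\neq 1$. First I reduce to a single orbital. Since $G$ is non-abelian, pick $f,g\in G$ with $[f,g]\neq 1$. The group $\langle f,g\rangle$ preserves each component of the complement of its set of common fixed points. These components are finitely many, since the set $\fix(f)\cap\fix(g)$ is a finite union of points and closed intervals. The commutator is nontrivial on at least one such component $I=(a,b)$. Restricting to $\bar I$ and rescaling, I may assume $\langle f,g\rangle$ has no global fixed point in $(0,1)$, while $[f,g]$ is nontrivial there.

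Next I produce an element whose support is compactly contained in $(0,1)$. I consider the germ homomorphisms at $0$ and at $1$, which send an element to its right derivative at $0$ and its left derivative at $1$. Both take values in the abelian group $\R_{>0}$, so every commutator, say $c=[f,g]$, has slope $1$ at both ends. Hence $c$ is the identity on $[0,\epsilon)\cup(1-\epsilon,1]$ for some $\epsilon>0$, by piecewise linearity with finitely many breaks. Since $c\neq 1$ on $(0,1)$, the support of $c$ is a nonempty set $\supp(c)\subset[\epsilon,1-\epsilon]$.

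Then I find an element $h\in\langle f,g\rangle$ with $h([\epsilon,1-\epsilon])\cap[\epsilon,1-\epsilon]=\emptyset$, and more precisely with $h^n([\epsilon,1-\epsilon])$ pairwise disjoint for $n\in\Z$. This is the main obstacle. Because there is no global fixed point in $(0,1)$, the orbits of $\langle f,g\rangle$ accumulate at $0$ and at $1$. What I need is a single element pushing a whole compact interval off itself. To get it, I would use that, near $0$, $f$ and $g$ are linear with slopes $\lambda,\mu$. If some element of $\langle f,g\rangle$ has a slope $\neq 1$ at $0$ and no fixed point in $(0,1-\epsilon]$, I take a suitable power of it. Otherwise I build one by a standard compactness/ping-pong argument. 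The set $K=[\epsilon,1-\epsilon]$ is covered by finitely many intervals, each moved upward by $f^{\pm1}$ or $g^{\pm1}$, because every point of $K$ is moved by some generator. Chaining these moves gives a word $h$ with $h(x)>x$ for all $x\in[\epsilon,\,1-\epsilon']$. I then replace $K$ by a slightly larger compact interval containing $\supp(c)$ and iterate $h$ enough times to get $h^N(K)\cap K=\emptyset$. With $h$ sending $K$ strictly above itself, the translates $h^{Nn}(K)$ are pairwise disjoint, and they accumulate only at the endpoints of the component of $\supp(h)$ that contains $K$.

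Finally, set $c_n=h^{Nn}ch^{-Nn}$. These elements have pairwise disjoint supports, so they commute and generate $\bigoplus_{\Z}\Z$, since $c$ has infinite order, as does every nontrivial element of $\PL([0,1])$. Conjugation by $t=h^N$ shifts the index $n$. It remains to check that no positive power of $t$ lies in $\langle c_n\rangle$ and that the product is semidirect. This holds because any nontrivial element of $\bigoplus\langle c_n\rangle$ has compact support in a union of finitely many translates of $K$, whereas $t^k$ for $k\neq 0$ moves every point of $K$. Hence $\langle c,t\rangle\cong\Z\wr\Z$.
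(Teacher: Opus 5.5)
There is a genuine gap at your first step, the reduction to a single orbital. (The paper only quotes this theorem from Guba--Sapir and gives no proof, so your argument has to stand on its own.)

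Restriction to $\bar I$ is a homomorphism $\langle f,g\rangle\to\PL(\bar I)$, and it is in general not injective. A copy of $\Z\wr\Z$ in the image does not by itself give one in $G$; a group can surject onto $\Z\wr\Z$ without containing it. Concretely, the actual elements $c=[f,g]$ and $t=h^N$ of $G$ need not satisfy the defining relations $[c,t^nct^{-n}]=1$. The support of $c$ can also meet another orbital $J$ of $\langle f,g\rangle$. Your $h$ was built by looking only at $I$, so it may have fixed points inside $\supp(c)\cap J$. Then $c$ and $t^nct^{-n}$ overlap on $J$ and need not commute. Your last paragraph (disjoint supports, hence commuting) is therefore valid only for the restrictions to $I$. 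Injectivity is not the issue: once the relations hold in $G$, your computation on $I$ does show that $\Z\wr\Z\to\langle c,t\rangle$ is injective. What is missing is that the relations hold globally.

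A standard way to close the gap is a minimality argument.
\begin{enumerate}
\item Let $H=\langle f,g\rangle$. Its support has finitely many components $I_1,\dots,I_k$.
\item For $c\in[H,H]$, let $S(c)$ be the set of those $I_j$ on which $c$ is nontrivial. By your germ argument, every element of $[H,H]$ is the identity near both endpoints of each $I_j$.
\item Choose $c\in[H,H]\setminus\{1\}$ with $|S(c)|$ minimal, pick $I\in S(c)$, and let $K\subset I$ be the convex hull of $\overline{\supp(c)\cap I}$, a compact subset of $I$.
\item Build $t\in H$ whose translates $t^n(K)$ are pairwise disjoint.
\item Then $d_n=[c,t^nct^{-n}]$ lies in $[H,H]$. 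It is trivial on $I$ and on every orbital where $c$ is trivial, so $|S(d_n)|<|S(c)|$, and minimality forces $d_n=1$.
\end{enumerate}
Hence all conjugates $t^nct^{-n}$ commute in $G$, and your argument on $I$ gives $\langle c,t\rangle\cong\Z\wr\Z$.

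Separately, your ``chaining'' construction of $h$ is too vague. Concatenating local upward moves does not obviously give $h(x)>x$ on all of $K$, since a point pushed up by one letter can be pushed back down by the next. A clean argument: $\sup H\cdot(\min K)$ is fixed by all of $H$, so it must be the right endpoint of $I$. Hence some $t\in H$ satisfies $t(\min K)>\max K$. Monotonicity then places $t^{n+1}(K)$ entirely to the right of $t^n(K)$ for every $n$, so the translates are pairwise disjoint.
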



We immediately deduce the following.

\begin{cor}\label{c.BS}
	Let $G\le \PL(\T)$ be a $\Z\wr\Z$-free subgroup of the group of orientation-preserving PL homeomorphisms of the circle. Then, the stabilizer of any point in $G$ is abelian.
\end{cor}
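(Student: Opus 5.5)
The plan is to reduce to the Guba--Sapir theorem by cutting the circle open at the fixed point. Fix $x\in\T$ and let $H=\mathrm{Stab}_G(x)$. Every $h\in H$ fixes $x$ and preserves orientation, so it induces an orientation-preserving homeomorphism of $\T\setminus\{x\}$. Identify $\T\setminus\{x\}$ with the open interval $(0,1)$ by an affine chart, i.e. lift to $\R$ and restrict to a fundamental interval $[\tilde x,\tilde x+1]$. Each $h\in H$ then extends to a homeomorphism $\hat h$ of $[0,1]$ fixing both endpoints.

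The map $\hat h$ is piecewise linear. Indeed, $h$ has finitely many breakpoints on $\T$, and the chart is affine on $(0,1)$, so $\hat h$ has finitely many breakpoints in $(0,1)$. Continuity at $0$ and $1$ holds because $h$ is continuous at $x$ and fixes it. The possible break of $h$ at $x$ itself only shows up as the two one-sided slopes at the endpoints $0$ and $1$, which is harmless since $\PL([0,1])$ puts no condition there.

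Next, the assignment $h\mapsto \hat h$ is an injective homomorphism $H\to\PL([0,1])$. Composition is respected because every element fixes $x$, so conjugating by the chart is compatible with products. Injectivity holds because $\hat h$ determines $h$ on $\T\setminus\{x\}$, and $h(x)=x$. Thus $H$ is isomorphic to a subgroup of $\PL([0,1])$. Being a subgroup of $G$, it is $\Z\wr\Z$-free, so the contrapositive of the Guba--Sapir theorem shows that $H$ is abelian.

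There is no substantial obstacle here. The only point needing care is the justification that the point $x$ is sent to the endpoints and does not break the PL structure. This is clear because only finitely many breakpoints are involved and none are created by the cut.
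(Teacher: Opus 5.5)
Your proposal is correct and is exactly the argument the paper intends when it says the corollary follows ``immediately'' from Guba--Sapir. Cutting the circle at the fixed point embeds the stabilizer into $\PL([0,1])$, and $\Z\wr\Z$-freeness then forces it to be abelian; your lift to $[\tilde x,\tilde x+1]$ simply spells out the details the paper leaves implicit.
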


We next recall some basic facts and terminology for group actions on the circle (see for instance Ghys \cite{Ghys} for an introduction).
For any group action on the circle, either the action admits a finite orbit, or there is a unique non-empty closed invariant subset of the circle which is minimal with respect to inclusion, called the \emph{minimal set} of the action. In the latter case, the minimal set is either the whole circle, or (a subset homeomorphic to) a Cantor set.
We say that a subgroup $G\le \homeo_+(\T)$ of the group of orientation-preserving circle homeomorphisms is \emph{elementary} if it preserves a Borel probability measure on the circle $\T$. Note that when $G$ is amenable, in particular a cyclic group, the action is always elementary. The \emph{rotation number} of a circle homeomorphism $g$ can be defined by the expression
\begin{equation}\label{eq.rotation}
	\rot(g)=\mu([x,g(x)))\pmod{\Z},\tag{$\star$}
\end{equation}
where $\mu$ is any $g$-invariant Borel probability measure, and $x\in \T$ a point. It is a crucial fact that the  value $\rot(g)$ does not depend on the choices of the invariant measure $\mu$ and the base point $x$. The rotation number, as a function $\rot\colon\homeo_+(\T)\to \R/\Z$ is not a homomorphism, but whenever a subgroup $G\le \homeo_+(\T)$ is elementary, the expression \eqref{eq.rotation}, for a common invariant measure $\mu$, ensures that the restriction of $\rot$ to $G$ is a homomorphism. A homeomorphism $g$ has rational rotation number $\rot(g)=p/q$ if and only if it admits a periodic orbit of order $q$. In particular, a circle homeomorphism $g$ has rotation number $\rot(g)=0$ if and only if it admits a fixed point: more precisely, any point in the support of an invariant Borel probability measure will be fixed.
Hence, when $G$ is elementary, the kernel of $\rot\colon G\to \R/\Z$ has global fixed points on the circle.

We will also use the fact that the classical Denjoy's theorem holds for piecewise-linear homeomorphisms, as proved by Herman in \cite{Herman}*{\S VI.4}:
\begin{thm}[Herman]\label{t.Denjoy}
	If $g\colon \T\to \T$ is a PL homeomorphism with irrational rotation number, then there exists a unique $g$-invariant Borel probability measure on $\T$, and it has full support.
\end{thm}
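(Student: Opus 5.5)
The proof splits into two parts: existence and uniqueness of the invariant measure, which hold for any circle homeomorphism with irrational rotation number, and full support, which is where piecewise linearity is used, through a Denjoy-type argument.

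\textbf{Uniqueness.} Let $\rho=\rot(g)\notin\Q/\Z$. An invariant probability measure exists by amenability of $\Z$. Since $g$ has no periodic orbits, it has no finite orbit, so any invariant $\mu$ has no atoms. Fix a base point $x_0$ and set $h_\mu(x)=\mu([x_0,x))$. This map is a continuous, monotone degree-one map of $\T$, and by \eqref{eq.rotation} it satisfies $h_\mu\circ g=R_\rho\circ h_\mu$. Hence $(h_\mu)_*\mu$ is an $R_\rho$-invariant probability measure, and so it is Lebesgue measure, by unique ergodicity of irrational rotations. For a second invariant measure $\nu$, I would use the minimal set $K$ of $g$. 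Any invariant measure is supported in the minimal set, so both $\mu$ and $\nu$ live on $K$. The semiconjugacy $h_\mu$ is injective on $K$ except at the countably many endpoints of gaps of $K$. Thus $\mu$ and $\nu$ are both pullbacks of Lebesgue measure through the same map modulo null sets, which gives $\mu=\nu$. Equivalently, one can show $h_\mu=h_\nu$ after adjusting the base point.

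\textbf{Full support.} Full support is equivalent to $K=\T$, that is, to $g$ having no \emph{wandering interval}: a nonempty open interval $I$ whose iterates $g^n(I)$, $n\in\Z$, are pairwise disjoint. Suppose such an $I$ exists. Since $g$ is PL, $\log Dg$ is defined away from the finitely many break points, is piecewise constant, and has finite total variation $V$ on $\T$; here $V$ is the sum of the absolute values of the logarithms of the jump ratios. I would then follow the Denjoy argument in these steps.

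\begin{enumerate}
\item Let $q_n$ be the denominators of the continued fraction of $\rho$. By the combinatorics of $R_\rho$, which $g$ shares through the semiconjugacy, there is, for $y\in I$, an interval $L_n\ni y$ containing $I$ (essentially between $g^{-q_n}(y)$ and $g^{q_n}(y)$) whose iterates $g^i(L_n)$, $0\le i<q_n$, have pairwise disjoint interiors.
\item For $a,b\in L_n$ (avoiding the countably many preimages of breaks), write $\log Dg^{q_n}(a)-\log Dg^{q_n}(b)=\sum_{i<q_n}\bigl(\log Dg(g^i a)-\log Dg(g^i b)\bigr)$. Since the points $g^i a,g^i b$ lie in disjoint intervals, this sum is bounded in absolute value by $V$.
\item Apply this to $g^{q_n}$ and to $g^{-q_n}$ on $I$, using the mean value theorem for absolutely continuous PL maps. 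This yields $|g^{q_n}(I)|\cdot|g^{-q_n}(I)|\ge e^{-V}|I|^2$, a bound uniform in $n$.
\item Since the intervals $g^m(I)$ are pairwise disjoint, $\sum_m|g^m(I)|\le 1$, so $|g^{\pm q_n}(I)|\to0$. This contradicts step 3.
\end{enumerate}

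The main obstacle is step 1: constructing $L_n$ with the disjointness property from the closest-return combinatorics. The cleanest route is to transport the corresponding statement for $R_\rho$, namely that the intervals $R_\rho^i\bigl((-\|q_n\rho\|,\|q_n\rho\|)\bigr)$ for $0\le i<q_n$ overlap at most twice. This still gives the bound $2V$, which suffices, and avoids delicate bookkeeping in the circle order.
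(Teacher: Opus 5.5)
The paper gives no proof of its own here and simply cites Herman's \S VI.4, whose argument is this same classical Denjoy scheme: unique ergodicity through the semiconjugacy to $R_\rho$, and no wandering intervals via a Denjoy--Koksma-type bound on $\log Dg^{q_n}$ coming from the bounded variation of $\log Dg$. Your proposal is correct and follows that route. One detail in Step 3 needs tightening, since the product bound requires comparing $Dg^{q_n}$ at $x\in I$ with $Dg^{q_n}$ at $g^{-q_n}(x)$: either take $L_n$ large enough to contain $g^{-q_n}(I)\cup I$ (your overlap-at-most-twice count then gives the constant $2V$), or use for each $x\in I$ the one-sided arc between $g^{-q_n}(x)$ and $x$, whose first $q_n$ iterates already have disjoint interiors, to get $Dg^{q_n}(x)\,Dg^{-q_n}(x)\ge e^{-V}$ pointwise and then integrate over $I$ with Cauchy--Schwarz.
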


Then, we have the following description of elementary subgroups $G\le \PL(\T)$.

\begin{prop}\label{p.elementary}
	Let $G\le \PL(\T)$ be an elementary, finitely generated $\Z\wr\Z$-free subgroup. Then $G$ is virtually abelian.
\end{prop}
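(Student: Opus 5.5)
Since $G$ is elementary, fix a $G$-invariant Borel probability measure $\mu$. By the discussion above, $\rot\colon G\to \R/\Z$ is then a homomorphism, and its kernel $K=\ker(\rot)$ has global fixed points, namely every point of $\supp(\mu)$. The plan is to split according to whether the image $\rot(G)$ is finite or contains an irrational number.

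\emph{Case 1: $\rot(G)$ is finite.} Then $K$ has finite index in $G$, so $K$ is finitely generated. Since $K$ fixes a point $x\in\supp(\mu)$, $K$ lies in the stabilizer of $x$. By Corollary~\ref{c.BS}, this stabilizer is abelian, so $K$ is abelian and $G$ is virtually abelian. Note that $\rot(G)$ is a finitely generated subgroup of $\R/\Z$. If all its elements are rational, it is finite, so we are in this case.

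\emph{Case 2: some $g\in G$ has irrational rotation number.} By Theorem~\ref{t.Denjoy}, $g$ has a unique invariant probability measure, and it has full support. Since $\mu$ is $g$-invariant, $\mu$ is this measure, so $\mu$ has full support. Moreover $\mu$ has no atoms, since an atom would give a finite $g$-orbit and hence a rational rotation number.

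Now take any $h\in K$. It fixes every point of $\supp(\mu)=\T$, so $h=\mathrm{id}$. Hence $K$ is trivial, and $\rot$ embeds $G$ into $\R/\Z$. In particular $G$ is abelian.

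This case needs slightly more care than Case 1, because we must apply the fixed-point fact pointwise. The fact is that any point in the support of an invariant measure is fixed by an element of rotation number zero. This can be checked directly from \eqref{eq.rotation}. If $\rot(h)=0$ and $x\in\supp(\mu)$, then $\mu([x,h(x)))\in\Z$, so it equals $0$ or $1$. If it is $0$, then $[x,h(x))$ is $\mu$-null, which is impossible for $x$ in the support unless $h(x)=x$. If it is $1$, then the complementary arc $[h(x),x)$ is null, and applying the same argument to $h^{-1}$ gives $h(x)=x$. With this in hand, Cases 1 and 2 together prove that $G$ is virtually abelian.

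The step I expect to be the main obstacle is the finite-index claim in Case 1. It relies on $\rot(G)$ being finite, which uses the elementary fact that a finitely generated subgroup of $\Q/\Z$ is finite. With that settled, the argument is short.
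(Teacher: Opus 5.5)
Your proof is correct and follows the paper's argument essentially verbatim. You split on whether $\rot(G)$ contains an irrational value, in which case Herman's theorem forces full support, so $\ker(\rot)$ is trivial and $G$ embeds in $\R/\Z$; otherwise $\rot(G)$ is a finite subgroup of $\Q/\Z$, so $\ker(\rot)$ has finite index and lies in a point stabilizer, which is abelian by Corollary~\ref{c.BS}.

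There is one small slip in your optional check of the fixed-point fact: a null half-open arc $[x,h(x))$ does not by itself contradict $x\in\supp(\mu)$ for a general $\mu$, since the support may accumulate at $x$ only from the left. This is harmless in Case 2, where $\supp(\mu)=\T$, and in general it is fixed by noting that invariance also makes $[h^{-1}(x),x)$ null, so $x$ has a null open neighbourhood.
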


\begin{proof}
	As we are assuming that $G$ is elementary, the rotation number $\rot\colon G\to \R/\Z$ defines a homomorphism. Let us first assume that there exists $g\in G$ with irrational rotation number; after Theorem \ref{t.Denjoy}, there exists a unique $g$-invariant Borel probability measure $\mu$ on $\T$, which is therefore the unique $G$-invariant Borel probability measure. After the previous discussion, every element in the kernel of $\rot$ fixes any point in the support of $\mu$. But after Theorem \ref{t.Denjoy}, the support of $\mu$ is the whole circle, so $\rot$ is injective. We deduce that $G$ is isomorphic to a subgroup of $\R/\Z$, hence abelian.
Assume next that $\rot(G)\subset \Q/\Z$. Since $G$ is finitely generated, the image $\rot(G)\subset \Q/\Z$ is finite. The subgroup $G_0=\ker (\rot)$ has global fixed points, so by Corollary \ref{c.BS}, $G_0$ is abelian.
\end{proof}

We next study the case when $G$ is non-elementary.

\begin{prop}\label{l.non-elementary}
	Let $G\le \PL(\T)$ be a non-elementary, countable, $\Z\wr\Z$-free subgroup, and let $\Lambda\subset \T$ be the minimal set for $G$. Then there exists a countable subset $\Delta\subset \Lambda$ such that the $G$-orbit of any point $x\in \Lambda\setminus \Delta$ is free.
\end{prop}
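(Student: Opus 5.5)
Since $G$ is countable, we take $\Delta$ to be the set of points of $\Lambda$ that are fixed by some nontrivial element of $G$; then $x\in\Lambda\setminus\Delta$ has trivial stabilizer by definition. It suffices to show that for each nontrivial $g\in G$ the set $\fix(g)\cap\Lambda$ is countable, because a countable union of countable sets is countable. The whole argument is thus about the fixed points of a single nontrivial element $g$ inside the minimal set.

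Fix $g\neq\mathrm{id}$ with $\fix(g)\cap\Lambda\neq\emptyset$, so $\rot(g)=0$. Since $g$ is PL, $\fix(g)$ is a finite union of points and closed intervals. This holds because $g-\mathrm{id}$ is piecewise affine with finitely many pieces, so on each piece it is either identically zero or vanishes at most once. Hence $\fix(g)\cap\Lambda$ is contained in the finitely many isolated fixed points together with $\Lambda\cap I_1,\dots,\Lambda\cap I_k$, where the $I_j$ are the maximal nondegenerate intervals on which $g$ is the identity. So we only need to rule out an uncountable intersection $\Lambda\cap I$ for such an interval $I$.

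This is the main step. Suppose $\Lambda\cap I$ is uncountable, hence infinite. Then $I$ contains a point of $\Lambda$ in its interior, say $x$. We use the standard fact that in a non-elementary minimal action (on $\Lambda$, either the circle or a Cantor set) there is a strong contraction property: for any open interval $J$ meeting $\Lambda$, the images $hJ$ with $h\in G$ can cover everything except an arbitrarily small neighbourhood of a point. Equivalently, the action is proximal after passing to a finite cover or quotient, but the standard statement for non-elementary minimal actions suffices. From this we will find $h\in G$ with $h(\T\setminus \mathrm{int}\, I)\subset \mathrm{int}\,I$, which is possible because $\T\setminus\mathrm{int}\,I$ is a proper arc and its complement meets $\Lambda$ in its interior. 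Then $g$ is supported in $\T\setminus I$, and $hgh^{-1}$ is a nontrivial element supported in $h(\T\setminus I)\subset\mathrm{int}\,I$, so $g$ and $hgh^{-1}$ have disjoint supports. Now $g$ fixes the point $x\in\mathrm{int} I$, and so does $hgh^{-1}$ if we choose $h$ so that $x\notin h(\T\setminus\mathrm{int}\,I)$, which the contraction allows. So $\langle g, hgh^{-1}\rangle$ lies in the stabilizer of $x$, and by Corollary~\ref{c.BS} this stabilizer is abelian.

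Commutation alone is not a contradiction, so we need to aim for a non-abelian point stabilizer, or build $\Z\wr\Z$ directly. For this, we pick a point $y\in\Lambda$ near the boundary of $\supp(g)$ that is fixed by $g$, and an element $f$ of the stabilizer of $y$ that moves points of $\supp(g)$ near $y$, obtained by composing contractions. Then $g$ and $fgf^{-1}$ both fix $y$ and do not commute, contradicting Corollary~\ref{c.BS}. Alternatively, we can arrange $h$ so that $h$ has an attracting fixed point in $I$ and the conjugates $h^n g h^{-n}$ have pairwise disjoint supports; then $\langle g,h\rangle\cong\Z\wr\Z$. I expect this last point to be the main obstacle: turning the dynamics into either a non-abelian stabilizer or a genuine copy of $\Z\wr\Z$. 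I would try first the ping-pong of disjoint supports, using the contraction property to produce a suitable $h$ with a fixed point in $\Lambda$.
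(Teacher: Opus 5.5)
Your reduction is fine and matches the paper's: since $\fix(g)$ is a finite union of points and closed arcs, everything comes down to excluding a nontrivial $g$ with a fixed arc $I$ whose interior meets $\Lambda$. The gap is in the main step, for two reasons.

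\textbf{The contradiction is never reached.} Your $h$ gives no contradiction: $g$ and $hgh^{-1}$ have disjoint supports, so they commute anyway, as you observe. Your two rescue strategies are left as plans. The missing idea is to put $h$ itself, not $hgh^{-1}$, into a point stabilizer together with $g$.

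Choose two disjoint open arcs $U,V\subset\fix(g)$, both meeting $\Lambda$, and $h\in G$ with $h(\T\setminus V)\subset U$. Then $h$ maps the closed arc $\T\setminus V$ into $U\subset \T\setminus V$, so it has a fixed point $p\in U\subset\fix(g)$. Hence $g,h$ lie in the stabilizer of $p$, which is abelian by Corollary~\ref{c.BS}. On the other hand, $\supp(g)\subset\T\setminus V$, so $\supp(hgh^{-1})=h(\supp(g))\subset U\subset \fix(g)$ is disjoint from $\supp(g)\neq\emptyset$. Thus $hgh^{-1}\neq g$, a contradiction.

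With this $h$ your option (b) also works, since $h^n(\supp(g))$ lies in $U$ for $n>0$ and in $V$ for $n<0$. It does not work with your $h$: the condition $h(\T\setminus\mathrm{int}\,I)\subset\mathrm{int}\,I$ neither forces $h$ to have a fixed point nor keeps $h^2(\supp(g))$ away from $\supp(g)$. A rotation by $1/2$ satisfies it whenever $I$ is longer than $1/2$.

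\textbf{The contraction property you invoke is false in general.} This is not a technicality. Suppose $G$ commutes with a nontrivial finite-order $\phi$, as for the group of lifts of $T$ through the double cover $x\mapsto 2x$, which already sits inside $T$. Then $\fix(g)$ is $\phi$-invariant, so $\phi(I)$ is another component of $\fix(g)$, disjoint from $I$. If $h(\T\setminus\mathrm{int}\,I)\subset\mathrm{int}\,I$, then $h(I)\supset \T\setminus\mathrm{int}\,I$ and $h(\phi(I))=\phi(h(I))\supset\T\setminus\mathrm{int}\,\phi(I)$. These two sets intersect, which contradicts $h(I)\cap h(\phi(I))=h(I\cap\phi(I))=\emptyset$.

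So you genuinely need the Antonov--Margulis theorem, as in the paper:
\begin{enumerate}
\item Pass to $C=\T/\langle\phi\rangle$, where the action is proximal on $\Lambda/\langle\phi\rangle$.
\item Pick $U,V$ inside $\fix(g)/\langle\phi\rangle$; this is legitimate because $g$ commutes with $\phi$.
\item Take $h\in G$ with $h(C\setminus V)\subset U$.
\item Replace $h$ by a power $h^k$, with $k$ at most the order of $\phi$, so that it has fixed points on $\T$. These lie over $U\cup V$, hence in $\fix(g)$.
\end{enumerate}
The argument of the previous part then goes through unchanged.
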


\begin{proof}
	As $G$ is countable, it is enough to prove that any non-trivial element of $G$ has at most finitely many fixed points in $\Lambda$. Assume this is not the case, then there exists a non-trivial element $g\in G$ such that the subset $\fix(g)=\{x\in \T:g(x)=x\}$ contains points of $\Lambda$ in its interior. We will write $\supp(g)=\T\setminus \fix(g)$ for the complement.
	We will use a classical result by Antonov \cite{Antonov}, rediscovered later by Margulis (see Ghys \cite[\S 5.2]{Ghys}): there exists a finite order homeomorphism $\phi\in \homeo_+(\T)$, commuting with $G$, such that the induced action of $G$ on the quotient circle $C:=\T/\langle \phi\rangle$ is \emph{proximal} in restriction to $L:=\Lambda/\langle \phi\rangle$: for any proper open subsets $U,V\subset C$ intersecting $L$, there exists an element $h\in G$ such that $h(C\setminus V)\subset U$.
	Let us take two disjoint open intervals $U,V\subset \fix(g)/\langle \phi\rangle$ intersecting the minimal set $L$. Let $h\in G$ be an element provided by the theorem of Antonov for this choice of $U$ and $V$, and remark that $h$ has fixed points in $U$ and $V$ but not outside these intervals. Going back to the circle $\T$, up to consider a positive power of $h$ (bounded by the order of $\phi$), we can assume that $h$ has fixed points on $\T$, which are all fixed by $g$. Moreover, the choice of $h$ ensures that $h(\supp(g))\subset \fix(g)$, and this implies that $g$ and $h$ generate a subgroup isomorphic to $\Z\wr\Z$ (otherwise, one simply checks that $h$ and $g$ cannot commute, and apply Corollary \ref{c.BS}).
\end{proof}

\section{Proof of the main theorem}

	Let $G\le T$ be a finitely generated $\Z\wr\Z$-free subgroup.
	If $G$ is elementary, Proposition \ref{p.elementary} gives that $G$ is virtually abelian. When $G$ is non-elementary, by Proposition \ref{l.non-elementary} we get that the $G$-action on $\T$ has (uncountably many) free orbits, so that Proposition \ref{p.free_orbit} allows us to conclude that $G$ is virtually free.

{\small \subsection*{Acknowledgments}
	This note was mainly written after a visit of M.T. at the Department of Mathematics at Cornell University in February 2022, and M.T. thanks the warm hospitality received there. The result was presented for the first time in August 2026 at Purdue University in the conference \emph{PL homeomorphisms and related topics}, and we thank the participants for their feedback, especially Thomas Koberda and Matt Zaremsky.
		
	M.T. is partially supported by the project ANR
	AnoDyn (ANR-24-CE40-5065-01), and both authors are partially supported by the projects ``Small spaces under actions'' (ECOS Sud -- Chili) and ``Interactions between groups and dynamics'' (MATH AmSud).}

\bibliographystyle{plain}

\bibliography{biblio.bib}

\medskip

\noindent\textit{Nicol\'as Matte Bon\\
	CNRS, Université Claude Bernard Lyon 1,\\
	Centrale Lyon, INSA Lyon, Université Jean Monnet,\\
	ICJ, UMR CNRS 5208, 69622 Villeurbanne, France\\}
\href{mailto:mattebon@math.univ-lyon1.fr}{mattebon@math.univ-lyon1.fr}

\smallskip

\noindent\textit{Michele Triestino\\
	Aix-Marseille Université, CNRS,\
	I2M \& Institut Universitaire de France,\\
	3 place Victor Hugo, 13331 Marseille Cedex 3, France\\}
\href{mailto:michele.triestino@univ-amu.fr}{michele.triestino@univ-amu.fr}

\end{document}